\documentclass[a4paper,11pt,oneside]{article}
\usepackage[english]{babel}
\usepackage{amsthm,amssymb}
\usepackage{mathrsfs,amscd}
\usepackage[colorinlistoftodos]{todonotes}
\usepackage{makeidx}
\usepackage[colorlinks=false, linkcolor=blue]{hyperref}
\usepackage{float}
%
%

\DeclareFontFamily{OT1}{ptm}{}
\DeclareFontShape{OT1}{ptm}{m}{n} { <-> ptmr}{}
\DeclareFontShape{OT1}{ptm}{m}{it}{ <-> ptmri}{}
\DeclareFontShape{OT1}{ptm}{m}{sl}{ <->ptmro}{}
\DeclareFontShape{OT1}{ptm}{m}{sc}{ <-> ptmrc}{}
\DeclareFontShape{OT1}{ptm}{b}{n} { <-> ptmb}{}
\DeclareFontShape{OT1}{ptm}{b}{it}{ <-> ptmbi}{}
\DeclareFontShape{OT1}{ptm}{bx}{n} {<->ssub * ptm/b/n}{}
\DeclareFontShape{OT1}{ptm}{bx}{it}{<->ssub * ptm/b/it}{}

\DeclareSymbolFont{bold}{OT1}{ptm}{b}{n}
\DeclareMathAlphabet{\mathbf}{OT1}{ptm}{b}{n}
\DeclareMathAlphabet{\mathrm}{OT1}{ptm}{m}{n}

\DeclareFontFamily{OT1}{psy}{}
\DeclareFontShape{OT1}{psy}{m}{n}{ <-> s * [0.9] psyr}{}
\DeclareFontFamily{OMS}{ptm}{}
\DeclareFontShape{OMS}{ptm}{m}{n}{ <8> <9> <10> gen * cmsy }{}
\DeclareFontFamily{OMS}{cmtt}{}
\DeclareFontShape{OMS}{cmtt}{m}{n}{ <8> <9> <10> gen * cmsy }{}

\SetSymbolFont{operators}{normal}{OT1}{ptm}{m}{n}
\SetSymbolFont{operators}{bold}{OT1}{ptm}{b}{n}
\DeclareSymbolFont{emsy}{OT1}{ptm}{m}{it}
\DeclareSymbolFont{emsr}{OT1}{ptm}{m}{n}
\DeclareSymbolFont{emcmr}{OT1}{cmr}{m}{n}
\DeclareSymbolFont{emsymb}{OT1}{psy}{m}{n}
\DeclareMathSymbol a{\mathalpha}{emsy}{"61}
\DeclareMathSymbol b{\mathalpha}{emsy}{"62}
\DeclareMathSymbol c{\mathalpha}{emsy}{"63}
\DeclareMathSymbol d{\mathalpha}{emsy}{"64}
\DeclareMathSymbol e{\mathalpha}{emsy}{"65}
\DeclareMathSymbol f{\mathalpha}{emsy}{"66}
\DeclareMathSymbol g{\mathalpha}{emsy}{"67}
\DeclareMathSymbol h{\mathalpha}{emsy}{"68}
\DeclareMathSymbol i{\mathalpha}{emsy}{"69}
\DeclareMathSymbol j{\mathalpha}{emsy}{"6A}
\DeclareMathSymbol k{\mathalpha}{emsy}{"6B}
\DeclareMathSymbol l{\mathalpha}{emsy}{"6C}
\DeclareMathSymbol m{\mathalpha}{emsy}{"6D}
\DeclareMathSymbol n{\mathalpha}{emsy}{"6E}
\DeclareMathSymbol o{\mathalpha}{emsy}{"6F}
\DeclareMathSymbol p{\mathalpha}{emsy}{"70}
\DeclareMathSymbol q{\mathalpha}{emsy}{"71}
\DeclareMathSymbol r{\mathalpha}{emsy}{"72}
\DeclareMathSymbol s{\mathalpha}{emsy}{"73}
\DeclareMathSymbol t{\mathalpha}{emsy}{"74}
\DeclareMathSymbol u{\mathalpha}{emsy}{"75}
\DeclareMathSymbol v{\mathalpha}{emsy}{"76}
\DeclareMathSymbol w{\mathalpha}{emsy}{"77}
\DeclareMathSymbol x{\mathalpha}{emsy}{"78}
\DeclareMathSymbol y{\mathalpha}{emsy}{"79}
\DeclareMathSymbol z{\mathalpha}{emsy}{"7A}
\DeclareMathSymbol A{\mathalpha}{emsy}{"41}
\DeclareMathSymbol B{\mathalpha}{emsy}{"42}
\DeclareMathSymbol C{\mathalpha}{emsy}{"43}
\DeclareMathSymbol D{\mathalpha}{emsy}{"44}
\DeclareMathSymbol E{\mathalpha}{emsy}{"45}
\DeclareMathSymbol F{\mathalpha}{emsy}{"46}
\DeclareMathSymbol G{\mathalpha}{emsy}{"47}
\DeclareMathSymbol H{\mathalpha}{emsy}{"48}
\DeclareMathSymbol I{\mathalpha}{emsy}{"49}
\DeclareMathSymbol J{\mathalpha}{emsy}{"4A}
\DeclareMathSymbol K{\mathalpha}{emsy}{"4B}
\DeclareMathSymbol L{\mathalpha}{emsy}{"4C}
\DeclareMathSymbol M{\mathalpha}{emsy}{"4D}
\DeclareMathSymbol N{\mathalpha}{emsy}{"4E}
\DeclareMathSymbol O{\mathalpha}{emsy}{"4F}
\DeclareMathSymbol P{\mathalpha}{emsy}{"50}
\DeclareMathSymbol Q{\mathalpha}{emsy}{"51}
\DeclareMathSymbol R{\mathalpha}{emsy}{"52}
\DeclareMathSymbol S{\mathalpha}{emsy}{"53}
\DeclareMathSymbol T{\mathalpha}{emsy}{"54}
\DeclareMathSymbol U{\mathalpha}{emsy}{"55}
\DeclareMathSymbol V{\mathalpha}{emsy}{"56}
\DeclareMathSymbol W{\mathalpha}{emsy}{"57}
\DeclareMathSymbol X{\mathalpha}{emsy}{"58}
\DeclareMathSymbol Y{\mathalpha}{emsy}{"59}
\DeclareMathSymbol Z{\mathalpha}{emsy}{"5A}
\DeclareMathSymbol{\bullet}{\mathalpha}{emsymb}{"B7}
\DeclareMathSymbol{\regis}{\mathalpha}{emsymb}{"D2}
\def\Bullet{\leavevmode\unkern{$\m@th\bullet$}\kern.32em\ignorespaces}
\def\Regis{\leavevmode\raise.5ex\hbox{$\m@th\regis$}}
\DeclareMathSymbol +{\mathbin}{emcmr}{`+}
\DeclareMathSymbol ={\mathrel}{emcmr}{`=}
\DeclareMathSymbol{\Gamma}{\mathalpha}{emcmr}{"00}
\DeclareMathSymbol{\Delta}{\mathalpha}{emcmr}{"01}
\DeclareMathSymbol{\Theta}{\mathalpha}{emcmr}{"02}
\DeclareMathSymbol{\Lambda}{\mathalpha}{emcmr}{"03}
\DeclareMathSymbol{\Xi}{\mathalpha}{emcmr}{"04}
\DeclareMathSymbol{\Pi}{\mathalpha}{emcmr}{"05}
\DeclareMathSymbol{\Sigma}{\mathalpha}{emcmr}{"06}
\DeclareMathSymbol{\Upsilon}{\mathalpha}{emcmr}{"07}
\DeclareMathSymbol{\Phi}{\mathalpha}{emcmr}{"08}
\DeclareMathSymbol{\Psi}{\mathalpha}{emcmr}{"09}
\DeclareMathSymbol{\Omega}{\mathalpha}{emcmr}{"0A}
\DeclareMathSizes{7.6}{8}{6}{5}
%
%
\def\`#1{{\accent"12 #1}}            
\chardef\J="11
\chardef\AA="C8                      
\chardef\gbp="A3                     
\chardef\TIL="81                     
\chardef\endash="B1
\chardef\emdash="D0
\chardef\pourmille="BD               
\chardef\aoben="E3                   
\chardef\ooben="EB                   
\def\S{\leavevmode\unkern{\char"A7}\kern.1em\ignorespaces}
\DeclareMathAccent{\dot}{\mathalpha}{operators}{"C7} 

\newtheorem{pro}{Proposition}
\newtheorem{te}{Theorem}

\newtheorem{re}{Remark}

\newtheorem{Qu}{Question}
\title{\textbf{Symmetric ribbon disks}}
\author{Paolo Aceto}
\date{}
\begin{document} 
\maketitle
\begin{abstract}
 We study the ribbon disks that arise from a symmetric union presentation of a ribbon knot.
 A natural notion of symmetric ribbon number $r_S(K)$ is introduced and compared
 with the classical ribbon number $r(K)$. We show that the difference $r_S(K)-r(K)$
 can be arbitrarily large by constructing an infinite family of ribbon knots $K_n$ 
 such that $r(K_n)=2$ and $r_S(K_n)>n$.
 The proof is based on a particularly simple description of symmetric unions in terms
 of certain band diagrams which leads to an upper bound for the Heegaard genus of their branched
 double covers.
 \end{abstract}
\begin{section}{Introduction}
Symmetric unions (see Section \ref{S2} for definitions) were first introduced 
by Kinoshita and Terasaka in \cite{KTbis}.  Christoph Lamm began a systematic investigation 
in \cite{Lamm:2000}. One of the main questions which motivate the study of symmetric 
unions is the following.
\begin{Qu}\label{mainquestion}
Does every ribbon knot admit a symmetric union presentation? 
\end{Qu}
In \cite{Lamm:2000} it is shown that all ribbon knots up to ten crossings have a symmetric union 
presentation. In \cite{Lamm:2006} the result is extended to all 2-bridge ribbon knots. In 
\cite{EisermannLamm:Eq} Eisermann and Lamm introduced a natural notion of \emph{symmetric 
equivalence} between symmetric union presentations providing a list of symmetric Reidemeister-like
moves. This notion is further investigated in \cite{EisermannLamm:SymJones} where a refined
version of the Jones polynomial is adapted to this setting.

One way to measure the complexity of a given ribbon knot $K$ is to consider its \emph{ribbon number}
$r(K)$ which is the minimal number of ribbon singularities among all ribbon disks spanning $K$. 
Every symmetric union presentation has an obvious symmetric ribbon disk associated with it, therefore
it makes sense to define the \emph{symmetric ribbon number} $r_{S}(K)$ of a given ribbon knot
$K$. Set $r_S(K)=\infty$ if $K$ does not admit a symmetric union presentation. Keeping Question
\ref{mainquestion} in mind it is natural to compare the ribbon number and its symmetric version 
for a given knot.
It turns out that the complexity of a symmetric union presentation (as measured by the 
symmetric ribbon number) can be arbitrarly large even for knots with $r(K)=2$.
\begin{te}\label{maintheorem}
 For each positive integer $n$ there exists a ribbon knot $K_n$ such that $r(K_n)=2$ and
 $r_S(K_n)>n$.
\end{te}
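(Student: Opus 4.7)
The plan is to combine an explicit construction of the knots $K_n$ with a key lemma bounding the Heegaard genus of the double branched cover $\Sigma_2(K)$ in terms of the symmetric ribbon number. Specifically, I expect the key lemma to take the form: if $K$ admits a symmetric union presentation with $s$ symmetric ribbon singularities, then $g(\Sigma_2(K))$ is bounded above by a function linear in $s$. The theorem then follows by producing ribbon knots $K_n$ with $r(K_n)=2$ and with $g(\Sigma_2(K_n))$ growing faster than this linear function of $n$, which is incompatible with any symmetric union presentation of $K_n$ using at most $n$ symmetric ribbon singularities.

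For the construction I would describe $K_n$ as the boundary of an explicit ribbon disk with exactly two ribbon singularities, making $r(K_n)\le 2$ immediate (and $r(K_n)=2$ as long as $K_n$ is neither trivial nor obtained by a single band move from an unlink). A natural template is a fusion of three unknotted components by two bands, in which the bands are chosen to have topological complexity growing with $n$ (for instance, $n$ full twists between the bands, or $n$ clasps threaded through the components). The double branched cover $\Sigma_2(K_n)$ can then be read off as Dehn surgery on a two-component link lifted from the ribbon disk, and the framings should be arranged so that $\Sigma_2(K_n)$ is a combinatorially tractable 3-manifold — most plausibly a small Seifert fibered space or graph manifold with many exceptional fibres — whose Heegaard genus is known to grow linearly with $n$.

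For the key lemma, the band-diagram description of symmetric unions developed earlier in the paper provides a symmetric picture of the diagram with a distinguished axis and the bands crossing it. The idea is to use the $\mathbb{Z}/2$-symmetry to extract a Heegaard splitting of $\Sigma_2(K)$ from a fundamental domain: one half of the symmetric diagram should lift in the double branched cover to a compression body whose genus is controlled by the number of bands meeting the axis, and the two lifted halves glue together along the lifted axis to recover $\Sigma_2(K)$. Counting handles produced by each band crossing should yield the desired bound $g(\Sigma_2(K)) \le c\, r_S(K)$ for an explicit small constant $c$.

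The main obstacle I anticipate is the lower bound $g(\Sigma_2(K_n)) > n$. Upper bounds on Heegaard genus from diagrams are comparatively soft, but Heegaard genus is notoriously difficult to bound from below, and the constraint $r(K_n)=2$ forces strong restrictions on $H_1(\Sigma_2(K_n))$, so one cannot simply invoke the rank of first homology. The most promising routes are either to identify $\Sigma_2(K_n)$ with a 3-manifold in a family whose Heegaard genus has already been computed (work of Boileau--Zieschang or Moriah--Schultens on Seifert fibered spaces would be the natural reference), or to exhibit a finite cyclic cover of $\Sigma_2(K_n)$ with first Betti number growing in $n$. The key lemma itself I expect to be essentially formal from the band-diagram setup, and the ribbon disks for $K_n$ are transparent by construction, so the real content of the proof is concentrated in the genus lower bound.
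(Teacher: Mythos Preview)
Your overall architecture matches the paper exactly: a linear upper bound $g_H(\Sigma(K))\le c\,r_S(K)$ coming from the band-diagram description of symmetric ribbon disks, combined with a family $K_n$ having $r(K_n)=2$ and $g_H(\Sigma(K_n))\to\infty$. Your sketch of the key lemma (lift a decomposition of $(S^3,K)$ coming from the band diagram to a Heegaard splitting of the branched cover) is essentially what the paper does.

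Where you diverge is in the genus lower bound, which you correctly identify as the real content. The paper does \emph{not} try to identify $\Sigma(K_n)$ as a Seifert fibered space or use covering-space rank arguments. Instead it builds $K_n$ as an iterated partial sum of a fixed prime tangle $T_1$ with $n$ copies of a fixed prime tangle $T_2$; the $n$ separating $2$-spheres in $(S^3,K_n)$ then lift to $n$ disjoint incompressible tori in $\Sigma(K_n)$ by Lickorish's theorem on branched covers of prime tangles. A short Dehn-filling argument shows no two of these tori are parallel, and the Eudave-Mu\~noz--Shor universal bound (a closed irreducible $3$-manifold of Heegaard genus $g$ contains at most $C_g$ disjoint non-parallel incompressible tori) forces $g_H(\Sigma(K_n))\to\infty$. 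This sidesteps exactly the difficulty you flagged: one never computes $g_H$ directly, and no explicit surgery description or homological input is needed. Your Seifert-fibered route could in principle work for a carefully chosen family, but it would require substantially more control over $\Sigma(K_n)$ than the paper's argument demands.
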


We do not know if any of the ribbon knots in the family $\{K_n\}$ admits a symmetric union 
presentation and we suspect that many of them do not. These knots are constructed as satellites
of the simplest non trivial ribbon knot, i.e. the connected sum of a trefoil knot with its mirror
image. The proof is based on the fact that for any ribbon knot $r_s(K)$ gives un upper bound
for the Heegaard genus of the branched double cover along $K$. We also show that there is a similar
inequality involving the \emph{free genus} of $K$.

In Section \ref{S2} we briefly review basic properties of symmetric unions and we establish
the inequalities we need. In Section \ref{S3} we introduce the family $\{K_n\}$ and give an estimate
for the Heegaard genus of their branched double covers which is needed to complete the 
proof of Theorem \ref{maintheorem}.
\end{section}

\begin{section}{Symmetric Unions, Branched Double Covers and the Free Genus}\label{S2}
A \emph{symmetric union presentation} of a knot is a diagram obtained in the following way. We start
with the symmetric diagram of a connected sum between a knot and its mirror image, and then we add
crossings on the axis of symmetry.
See Figure \ref{ex} for some examples taken from \cite{Lamm:2000}. 
See also \cite{Lamm:2000} for a formal definition.
\begin{figure}[h!]
	    \centering
	 {\qquad\includegraphics[scale=0.2]{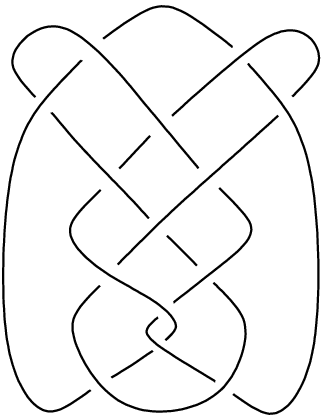}\quad}
  {\qquad\includegraphics[scale=0.2]{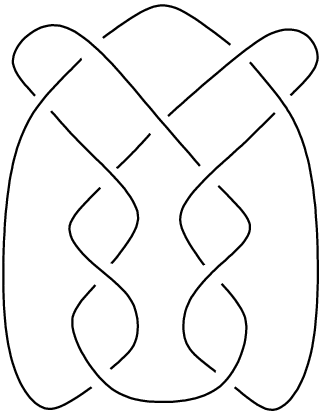}\quad}
 {\quad\includegraphics[scale=0.2]{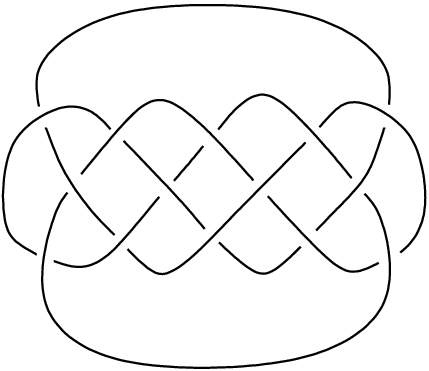}\quad}
 \caption{Symmetric union presentations for the knots $9_{41}$, $10_{22}$ and $10_{123}$.}
	\label{ex}
	    \end{figure}
The obvious reflection will fix the knot except near crossings
on the axis which are necessarly reversed. More generally one can talk about
symmetric diagrams, these will represents links that bounds ribbon surfaces consisting of 
annuli, Mobius strips and disks. Each symmetric union diagram describes an obvious ribbon disk
spanning the given knot, see Figure \ref{simdisk} for an example.

\begin{figure}[H]
	    \centering
	\includegraphics[scale=0.3]{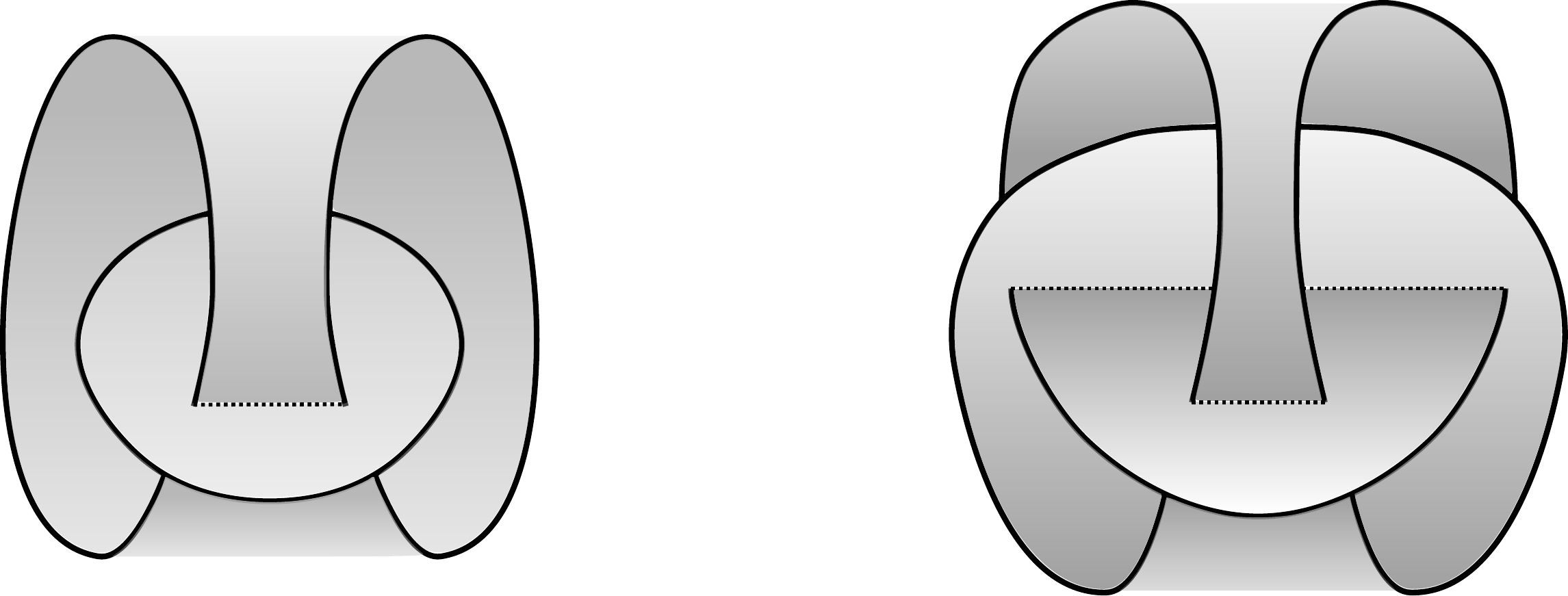}
	\caption{On the left: a symmetric link diagram bounding a ribbon surface 
	consisting of a disk and an annulus. On the right: a symmetric union diagram and the
	corresponding symmetric ribbon disk.}
	\label{simdisk}
	    \end{figure}
	    
This disk is obtained by joining symmetric points 
with a straight line (see \cite{EisermannLamm:Eq}). We call such disks
\emph{symmetric ribbon disks}.
Recall from 
\cite{Eisermann:2009} that every ribbon disk (and more generally every ribbon surface) in $S^3$
admit a diagram, called a \emph{band diagram}, made up from the elementary pieces shown in Figure \ref{banddiagrams}.
\begin{figure}[H]  
  {\quad\qquad\includegraphics[scale=0.8]{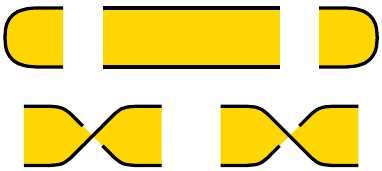}\quad}
  {\qquad\includegraphics[scale=0.8]{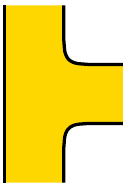}\quad}
 {\quad\includegraphics[scale=0.8]{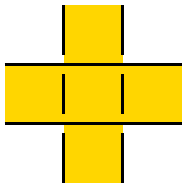}\quad}
 {\qquad\includegraphics[scale=0.8]{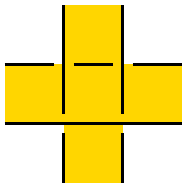}}
  \caption{Elementary pieces of band diagrams}
  \label{banddiagrams}
\end{figure}
The second picture represents a \emph{junction}.
The third picture represents a \emph{band crossing} and the fourth one a ribbon singularity.
Our first observation
is that symmetric ribbon disks admit a nice combinatorial characterization in terms of 
band diagrams. 
\begin{pro}\label{keybservation}
 Every symmetric ribbon disk admits a band diagram without any band crossings nor junctions. 
 Conversely, every ribbon disk that has a band diagram without band crossings or junctions 
 is a symmetric ribbon disk.
\end{pro}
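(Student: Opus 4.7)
The plan is to exhibit an explicit band diagram for a symmetric ribbon disk by using the axis of symmetry as the core of a single strip, and conversely to argue that any band diagram with only strips and ribbon singularities can be arranged symmetrically about the core of its unique strip.

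For the forward direction, let $D$ be a symmetric union diagram with axis $A$ and let $\Delta$ be its symmetric ribbon disk, which by construction is foliated by the straight segments joining mirror pairs of points on $K$. The midpoints of these segments lie on $A$, so $A$ is naturally the core of a single strip presentation of $\Delta$. Away from the crossings of $D$ on $A$ this strip is locally embedded and contributes only a piece of strip to the band diagram. At each crossing of $D$ on $A$, two sheets of the strip meet transversely along an arc, contributing exactly one ribbon singularity to the band diagram. The resulting band diagram consists of a single strip with one ribbon singularity per axis crossing, and hence has no junctions and no band crossings.

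For the converse, let $\Delta$ be a ribbon disk admitting a band diagram with only strips and ribbon singularities. The absence of junctions means there is no branching in the band structure, so the constituent strips can be concatenated into a single strip $S$ whose only self-intersections are the prescribed ribbon singularities. An ambient isotopy of $S^{3}$ allows one to straighten the core of $S$ onto a line $A$ and to flatten $S$ so that it becomes symmetric under reflection across the plane containing $A$ and perpendicular to $S$, with the ribbon singularities all lying on $A$. Then this reflection is a symmetry of $\Delta$ which reverses the crossings of $K=\partial\Delta$ lying on $A$, so the resulting projection is a symmetric union and $\Delta$ is exactly its associated symmetric ribbon disk.

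The main obstacle is in the converse direction. One has to argue carefully that the concatenation of non-branching strips into a single strip $S$ and the ambient isotopy straightening and flattening $S$ can both be performed without introducing new band crossings or junctions, and that the ribbon singularities can be brought onto $A$ to yield genuine axis crossings of a symmetric union. A careful inspection of the local models in Figure \ref{banddiagrams} at the boundary between a strip and a ribbon singularity is needed to ensure that the symmetry conditions of a symmetric union presentation are met exactly, and not merely up to isotopy of the diagram.
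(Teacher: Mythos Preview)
Your forward direction contains a genuine error: crossings on the axis $A$ do \emph{not} produce ribbon singularities of the symmetric ribbon disk. Near an axis crossing the two strands are exchanged by the reflection, and the family of segments joining mirror pairs sweeps out a band with a \emph{half-twist}; no two sheets of $\Delta$ meet there. The ribbon singularities of $\Delta$ arise instead from the crossings of the partial knot (the ones off the axis): two disjoint pairs of mirror points can have overlapping connecting segments, and this is exactly where one band of $\Delta$ pierces another. Your description therefore misses all the actual ribbon singularities and inserts spurious ones. Relatedly, the midpoints of the connecting segments do not lie on the line $A$; they fill out an immersed arc in the symmetry \emph{plane} (the plane through $A$ perpendicular to the diagram plane). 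The paper's argument is precisely to project $\Delta$ onto that plane: the image is a self-intersecting arc whose double points are the ribbon singularities, and a small perturbation of this planar picture is the desired band diagram without band crossings or junctions. Axis crossings are first smoothed (removing half-twists) and then reinserted as twists at the end.

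The same geometric confusion affects your converse. You propose to straighten the core of the single strip onto a line $A$ and reflect through a plane containing $A$. But if the core is a straight line, the strip cannot have ribbon self-intersections at all, so the ribbon singularities have nowhere to go; saying they ``lie on $A$'' does not describe a ribbon singularity. The correct symmetry is the one the paper uses: keep the core as an immersed planar arc (in the projection plane of the band diagram), arrange each band to be bisected by that plane along its core, and reflect through the projection plane. Half-twists are the only places where the reflection fails to fix the knot, and they become the axis crossings once you project onto a plane orthogonal to the symmetry plane. In short, the axis of the symmetric union is the trace of a \emph{plane}, and the core of the band diagram is an immersed curve in that plane, not the axis line itself.
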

\begin{proof}
First note that each crossing on the axis corresponds to a half twist of a band which is 
a portion of the symmetric ribbon disk. As a first step
we momentarily remove all these twists by smoothing crossings on the axis as shown in 
Figure \ref{smooth}.
\begin{figure}[h!]  
\centering
   \includegraphics[scale=0.5]{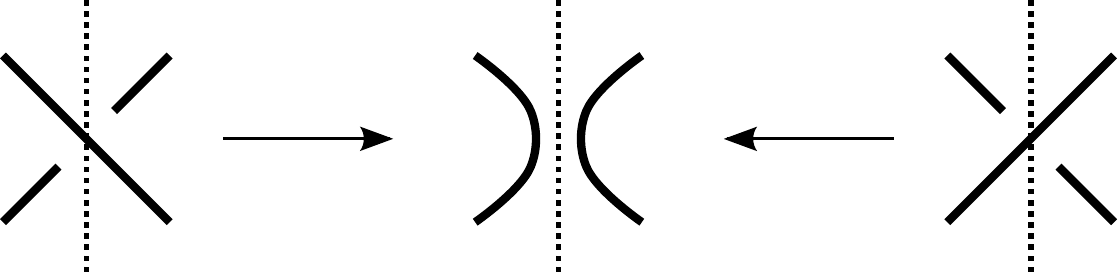}
  \caption{How to smooth crossings on the axis so that they correspond to the removal of half twists
  in the symmetric ribbon disk.}
  \label{smooth}
\end{figure}

Recall that the disk
is obtained by joining symmetric points with a straight line orthogonal to the symmetry plane. 
It follows that the projection of the disk onto the symmetry plane consists of an arc 
with selfintersections (double points). For example by projecting 
the symmetric ribbon disk in Figure \ref{simdisk} on its symmetry plane we get the curve in Figure 
\ref{selfint}.
\begin{figure}[h!]  
\centering
   \includegraphics[scale=0.4]{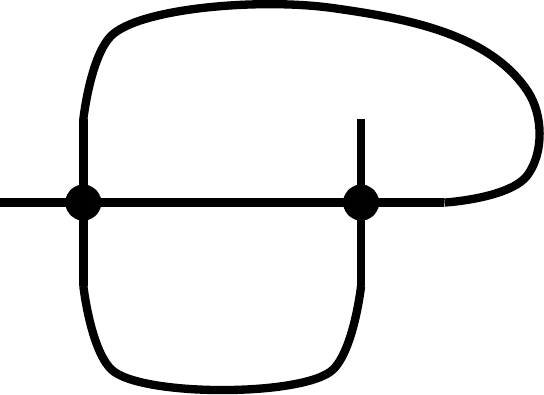}
  \caption{A selfintersecting curve resulting from the projection of a symmetric ribbon disk onto 
  its symmetry plane.}
  \label{selfint}
\end{figure}
Each smooth point on this curve corresponds to a properly embedded interval in the ribbon disk while
double points correspond to ribbon singularities.
 The desired band diagram may be obtained by slightly perturbing this projection as follows.
 First we modify the projection in a neighbourhood of each double point as shown in Figure 
 \ref{perturbing}. 
 \begin{figure}[h!]  
\centering
   \includegraphics[scale=1]{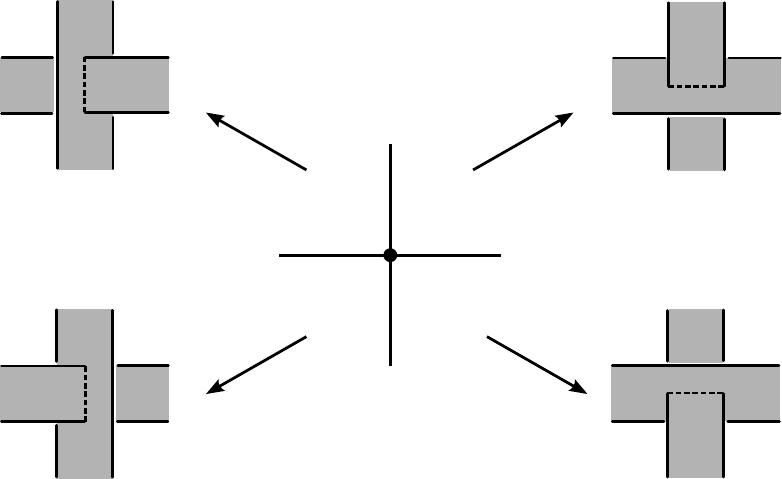}
  \caption{How to perturb the projection near a double point. }
  \label{perturbing}
\end{figure}
 The perturbation is obtained by rotating each band on its core, this may be done
 in two different ways which correspond to the two rows in Figure \ref{perturbing}. Now we slightly
 perturb the rest of the projection which appears as a collection of arcs. Each arc will appear as
 a band after a rotation on its core. Finally note that all these local choices for the rotation of
 bands can produce band twists. Specifically twists will arise each time two adiacent bands
 are rotated in opposite directions.
 
 Now the original crossings on the axis can be restored. 
 Each crossing on the axis correspond to a half twist of a band of the ribbon disk.
 This band may be located in the projection we just obtained and the half twist can be reinserted.
 The band diagram obtained in this way has no band crossings and no
 junctions.
 
 Conversely suppose we are given a band diagram of a ribbon disk without band crossings and 
 without junctions. Near each ribbon singularity we may change the projection to obtain
 a pair of arcs intersecting trasversely in a single point (see Figure \ref{perturbing}). We may do the
 same for the bands connecting these ribbon singularities. Each band will look like an arc
 outside small regions corresponding to band twists. Near each ribbon singularity we may choose 
 a plane parallel to the projection plane that cuts each band along its core. 
 We may assume that all these planes coincide because all ribbon singularities are connected 
 by band in planar fashion. We may also assume that the plane cuts each band connecting two 
 ribbon singularities along its core (except near band twists). 
 All these adjustments may be carried out without altering our projection.
 Note that the reflection across this plane fixes the knot except near the band twists.
 A symmetric union diagram can be obtained
 by projecting the knot onto any plane orthogonal to symmetry plane.
 
\end{proof}
\begin{re}\label{re}
 The band diagram obtained in the above proof has the same number of ribbon singularities
as the original symmetric ribbon disk.
\end{re}
\begin{re}\label{rebis}
 Proposition \ref{keybservation} holds in the more general context of symmetric diagrams.
 Every symmetric link bounds a ribbon surface consisting of disks, annuli and Mobius bands.
 A suitable projection can be chosen so that this surface can be described via a 
 band diagram without band crossings and without junctions. The proof is the same as that
 of Proposition \ref{keybservation}.
\end{re}

Before we explore some consequences of Proposition \ref{keybservation} we introduce some notation.
Given a knot $K$ we indicate by $g_F(K)$ the \emph{free genus} of $K$, i.e. the minimal genus
among all Seifert surfaces spanning $K$ whose complement have free fundamental group (these
are sometimes called \emph{regular} Seifert surfaces). We denote 
by $\Sigma(K)$ the branched double cover of $S^3$ branched along $K$. Finally for any closed
orientable 3-manifold $Y$ we denote by $g_H(Y)$ its Heegaard genus.
\begin{pro}\label{inequalities}
 Let $K$ be a ribbon knot, and let $D$ be a symmetric ribbon disk which minimizes 
 the number of ribbon singularities.
 \begin{enumerate}
  \item $S^3\setminus D$ has free fundamental group and its rank equals $r_S(K)$
  \item $r_S(K)\geq \frac{1}{4}(g_H(\Sigma(K))+1)$
  \item $r_S(K)\geq g_F(K)$
 \end{enumerate}
\end{pro}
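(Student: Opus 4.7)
The plan is to use Proposition~\ref{keybservation} to fix a band diagram of $D$ without junctions or band crossings, with exactly $r=r_S(K)$ ribbon singularities, and exploit the simple structure this provides throughout. Since the band diagram has no junctions or band crossings, $D$ decomposes into embedded bands meeting only along the $r$ ribbon singularity arcs, in an essentially tree-like fashion that controls everything else.

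For Part (1), I would produce an explicit handle decomposition of the exterior $M=S^3\setminus\mathrm{int}\,N(D)$ by following the band structure: each ribbon singularity contributes one generator to $\pi_1(M)$, namely a small loop that threads through the piercing arc, while the bands themselves, being a tree of rectangles, contribute no additional loops. Equivalently, one exhibits a spine of $M$ which is a wedge of $r$ circles, showing that $M$ is a handlebody of genus $r$; then $\pi_1(S^3\setminus D)=\pi_1(M)$ is free of rank $r$.

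For Part (2), I would obtain a Heegaard splitting of $\Sigma(K)$ by lifting the decomposition $S^3=N(D)\cup M$ to the branched double cover. Since $K=\partial D\subset\partial N(D)$ and $K\cap M=\emptyset$, the cover is unbranched over $M$ and branched along $K$ over $N(D)$. The unbranched double cover $\widetilde M$ of a genus-$r$ handlebody is again a handlebody, of genus at most $2r-1$ by the Euler characteristic count $\chi(\widetilde M)=2\chi(M)$, and a parallel handle-theoretic analysis bounds the genus of the branched piece $\widetilde{N(D)}$ in terms of $r$. Combining these two pieces along their common boundary yields a Heegaard splitting of $\Sigma(K)$ of total genus at most $4r-1$, which rearranges to the stated inequality $r_S(K)\geq\tfrac14(g_H(\Sigma(K))+1)$.

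For Part (3), I would tube each ribbon singularity of $D$ to produce a regular Seifert surface $F$ of genus $r$: at each singularity, remove a small disk from one of the two local sheets and attach a thin annular tube in $S^3\setminus D$ connecting the resulting hole to the other sheet, so that the singular arc is resolved. The surface $F$ is then embedded with $\partial F=K$ and $g(F)=r$, and each tube can be isotoped back to an arbitrarily small neighborhood of its singular arc, so $S^3\setminus F$ has the same homotopy type as $S^3\setminus D$; by Part (1), $\pi_1(S^3\setminus F)$ is free of rank $r$, hence $F$ is regular and $g_F(K)\leq r$. The most delicate step will be the bookkeeping in Part (2), especially verifying that the branched piece $\widetilde{N(D)}$ does not contribute more than the claimed amount to the Heegaard genus; Parts (1) and (3) reduce to direct handle-theoretic constructions once the band-diagram description of $D$ is fixed.
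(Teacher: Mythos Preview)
Your Part (1) is fine and matches the paper's reasoning.

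Part (2), however, has a genuine gap. The decomposition $S^3=N(D)\cup M$ does \emph{not} lift to a Heegaard splitting of $\Sigma(K)$ in the way you describe. Since $K=\partial D$ bounds the (singular) $2$--chain $D$ inside $N(D)$, every loop $\gamma\subset M$ satisfies $\mathrm{lk}(\gamma,K)=\gamma\cdot D=0$. Hence the homomorphism $\pi_1(M)\to\mathbb{Z}/2$ determining the cover is trivial, and $\widetilde M$ is \emph{two disjoint copies} of $M$, not a connected handlebody of genus $2r-1$. You are left with $\Sigma(K)=M\cup\widetilde{N(D)}\cup M$, which is not a Heegaard decomposition, and controlling $\widetilde{N(D)}$ (the branched double cover of a genus--$r$ handlebody along $K$) is exactly the hard part you have not addressed. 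The paper avoids this by using a different decomposition of $(S^3,K)$: it encloses each ribbon singularity in a small ball meeting $K$ in a trivial $4$--string tangle and joins these balls by tubes disjoint from $D$. Both complementary pieces are then balls containing trivial tangles, and such pairs lift to honest handlebodies in $\Sigma(K)$, giving a Heegaard splitting whose genus is linear in $r$.

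Part (3) also has an error in the justification. Your tubed surface $F$ has genus $r$, so by Alexander duality $H_1(S^3\setminus F)$ has rank $2r$; thus $S^3\setminus F$ cannot be homotopy equivalent to $S^3\setminus D$, whose fundamental group you just showed has rank $r$. The tubed surface is indeed free, but not for the reason you give. The paper instead applies Seifert's algorithm directly to the band diagram (after normalising the orientations at each ribbon singularity), counts that there are $2r+1$ Seifert circles and $4r$ crossings, and obtains a \emph{regular} Seifert surface of genus exactly $r$; regularity follows because surfaces produced by Seifert's algorithm always have handlebody complement.
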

\begin{proof}
 The first assertion is clear from the proof of Proposition \ref{keybservation} and Remark \ref{re}. 
 \begin{figure}[h!]
	    \centering
	\includegraphics[scale=0.8]{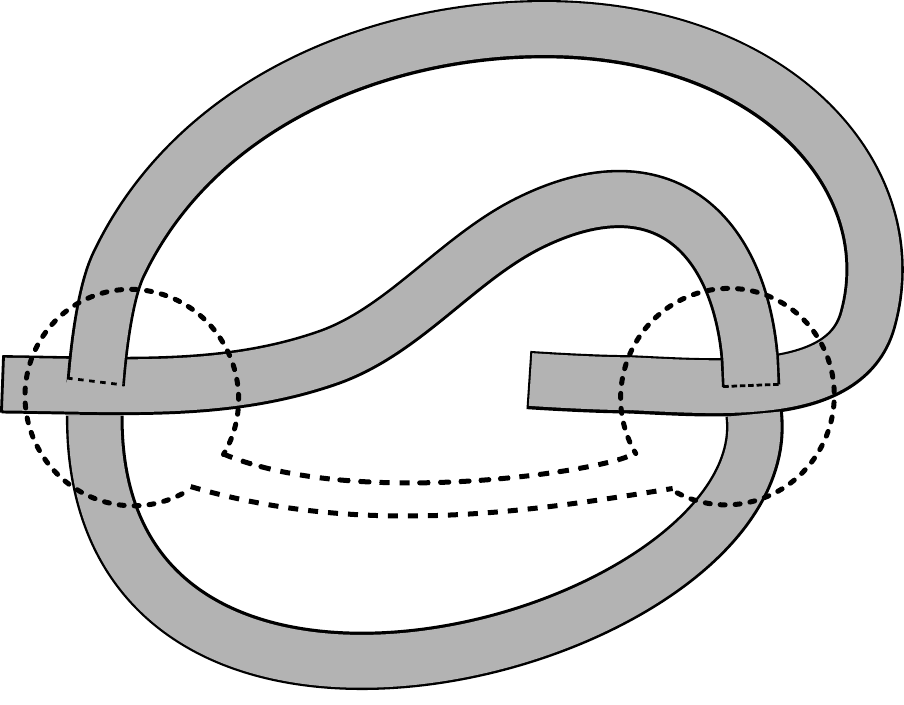}
	\caption{A decomposition of $(S^3,K)$ that lifts to a Heegaard decomposition of $\Sigma(K)$.}
	\label{exbis}
	    \end{figure}
 Choose a band diagram for $D$ without band crossings and without junctions. We can enclose each ribbon singularity
 in a ball whose boundary intersects the knot in exactly eight points. Now we can connect these
 balls with unknotted tubes disjoint from $D$, in this way we obtain a decomposition of
 $S^3$ into two balls $B_1$ and $B_2$. Each pair $(B_i,B_i\cap K)$ is homeomorphic to the standard
 ball containing a trivial braid, therefore this decomposition lifts to a Heegaard decomposition
 for $\Sigma(K)$ of genus $3r_S(K)$ (see Figure \ref{exbis} for an example). The inequality follows.
 
 Let us fix again a band diagram for $D$ without band crossings and without junctions
 and an orientation for $K$. 
 Near each ribbon singularity there are four possible oriented configurations as shown in 
 Figure \ref{orientedconfigurations}. 
 \begin{figure}[h!]
	    \centering
	\includegraphics[scale=1.4]{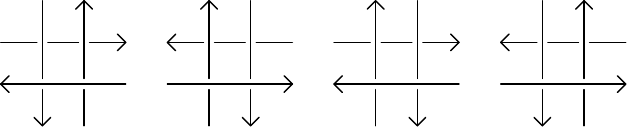}
	\caption{The four possible oriented configurations of a ribbon singularity}
	\label{orientedconfigurations}
	    \end{figure}
 By flipping the horizontal band if necessary
  we may assume that only the first two cases occur. Now assume, momentarly, that there are
  no band twists in our diagram. In this situation
 the Seifert algorithm will produce a regular Seifert surface $S$ whose genus is precisely $r_S(K)$.
 To see this note that near each ribbon singularity the oriented resolution gives four arcs 
 corresponding to the four bands connected to the ribbon singularity. All but two of the 
 Seifert circles will appear as an arc on two ribbon singularities (once these are appropriately
 resolved). Therefore the number of Seifert circles of $S$ is $2r_S(K)+1$. This means  that
 the Euler characteristic is $\chi(S)=2r_S(K)+1-4r_S(K)=1-2r_S(K)$ from which we obtain
 $g(S)=r_S(K)$. It is easy to check that band twists do not alter the computation we made above
 because they do not change the Seifert graph associated to $S$. 
 \end{proof}
 \end{section}
\begin{section}{The Family $\{K_n\}$}\label{S3}
In this section we will construct an infinite family of ribbon knots $\{K_n\}$ such that 
$r(K_n)=2$ and $g_H(\Sigma(K_n))\geq n$. This fact together with the second assertion of 
Proposition \ref{inequalities} will conclude the proof o\label{lemma}f Theorem \ref{maintheorem}.
The idea is to start with a very simple ribbon knot and then replace a piece of unknotted band
with a knotted one, this operation will change the knot and its ribbon disk 
without changing its ribbon number. 
See Figure \ref{ribsat} for an example. 

First we recall some basic terminology and facts from \cite{Lickorish:1981}. By a \emph{tangle} we
mean a pair $(B,t)$ where $B$ is a 3-ball and $t$ is a pair of properly embedded arcs in $B$. Two 
tangles $(B,t),(B',t')$ are \emph{equivalent} if there is an homeomorphism  of pairs from $(B,t)$
to $(B',t')$. A tangle is said to be \emph{untangled} is it is equivalent to the trivial tangle 
(these are usually called \emph{rational} tangles). A tangle is \emph{locally unknotted} if every
2-sphere which meets $t$ transversely in two points bounds a ball which meets $t$ in an unknotted 
arc. Finally a locally unknotted tangle which is not untangled is said to be \emph{prime}. 
Given two tangles $(B,t)$, $(B',t')$ and a homeomorphism 
$\varphi:(\partial B,\partial t)\rightarrow (\partial B',\partial t')$ a link can be obtained
by identifying the boundaries of the two tangles via 
$\varphi$. We say that such a link is obtained as the 
\emph{sum of the tangles} $(B,t)$ and $(B',t')$. We will also need the notion of \emph{partial sum}
between tangles. This operation consists in identifying a $(\textrm{disk},\textrm{pair of points})$
pair in the boundary of a tangle with a similar pair in the boundary of another tangle so that
the result is still a tangle.
\begin{te}\label{lick}\cite{Lickorish:1981}
 Let $(B,t)$ be a prime tangle. Its branched double cover $Y$ is irreducible and its boundary
 $\partial Y$ is incompressible.
\end{te}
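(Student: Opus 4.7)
The plan is to exploit the covering involution $\tau$ of the branched double cover $Y\to B$, whose fixed-point set is the preimage of the two arcs of $t$ and whose quotient recovers $(B,t)$. Since $\partial Y\to\partial B=S^2$ is a double cover branched at the four points $\partial t$, the boundary $\partial Y$ is a torus, and an essential simple closed curve on $\partial Y$ projects to a simple closed curve on $\partial B\smallsetminus\partial t$ that separates the four punctures into two pairs of two. I would invoke the equivariant sphere theorem and the equivariant loop theorem of Meeks--Simon--Yau, deducing that if $Y$ were reducible or $\partial Y$ compressible one could find an essential sphere, respectively a compressing disk, that is either $\tau$-invariant or disjoint from its $\tau$-image; then each alternative is ruled out using local unknottedness and the fact that $(B,t)$ is not rational.

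For irreducibility, let $S\subset Y$ be an essential equivariant sphere. If $S\cap\tau S=\emptyset$, then $S$ is disjoint from $\mathrm{Fix}(\tau)$ and descends to an embedded sphere $S'$ in $B\smallsetminus t$; since $B$ is a ball and the endpoints of $t$ lie on $\partial B$, $S'$ bounds a sub-ball disjoint from $t$, which lifts to a ball in $Y$ bounded by $S$. If $\tau S=S$, then $\tau|_S$ is an orientation-preserving involution of $S^2$: it cannot act freely, since $\mathbb{R}\mathrm{P}^2$ does not embed in the orientable ball $B$, so it is a rotation with two fixed points, and $S$ descends to a topologically embedded sphere meeting $t$ transversely in two points. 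Local unknottedness then produces a bounding ball meeting $t$ in an unknotted arc, whose double branched cover is a ball in $Y$ bounded by $S$, a contradiction.

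For incompressibility, let $D\subset Y$ be an equivariant compressing disk for $\partial Y$. In the case $D\cap\tau D=\emptyset$, the union $D\cup\tau D$ descends to a properly embedded disk $D'\subset B$ disjoint from $t$, and $\partial D'$ must separate $\partial t$ as $2+2$ because $\partial D$ is essential in $T^2$. Cutting $B$ along $D'$ yields two balls each containing a single arc of $t$; local unknottedness lets one isotope each such arc onto $\partial B$, so $(B,t)$ is rational, contradicting primeness. In the case $\tau D=D$, the involution $\tau|_D$ is either free---so that $D/\tau$ is a Möbius band in $B$ disjoint from $t$ whose boundary separates one point of $\partial t$ from the other three---or a reflection, in which case $D/\tau$ is a disk meeting $t$ in a subarc and $\partial B$ in an arc, realizing an isotopy of part of $t$ onto $\partial B$. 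A finer analysis in each subcase, again using local unknottedness, produces an isotopy exhibiting $(B,t)$ as rational.

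The main obstacle is the invariant compressing-disk case: one must carefully describe how $D/\tau$ sits with respect to $t$ and $\partial B$ in both the Möbius-band and reflection subcases, and then assemble the resulting local isotopies into a global isotopy of $t$ into $\partial B$ to reach the contradiction. By comparison the essential-sphere case is comparatively routine, since local unknottedness supplies the bounding ball almost immediately once the two equivariant alternatives have been isolated.
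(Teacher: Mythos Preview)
The paper gives no proof of this statement: it is quoted from \cite{Lickorish:1981} and used as a black box, so there is nothing in the paper itself to compare your argument against.

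Your outline via the equivariant sphere and loop theorems is the standard route and is essentially Lickorish's own argument. On the points you leave open: the M\"obius-band subcase cannot occur at all, since capping a properly embedded M\"obius band in $B^{3}$ with the disk in $\partial B$ bounded by its boundary circle would give an embedded $\mathbb{RP}^{2}$ in $S^{3}$. In the reflection subcase the fixed arc of $\tau|_{D}$ has its endpoints on $\partial D\subset\partial Y$, hence on the preimage of $\partial t$, so $D/\tau$ is a disk whose boundary consists of one \emph{entire} string of $t$ together with an arc on $\partial B$; thus one string is boundary-parallel, but this by itself does not yet force $(B,t)$ to be rational, so your worry is justified. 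A cleaner way to finish is to observe that once $Y$ is known to be irreducible, any compressing disk for the torus $\partial Y$ forces $Y$ to be a solid torus, and then the classification of smooth involutions on a solid torus with one-dimensional fixed set yields that $(B,t)$ is rational. Finally, in the disjoint-disk case the spheres $\partial B_{i}$ lie partly on $\partial B$, so you must push them slightly into the interior of $B$ before invoking local unknottedness.
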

We recall from \cite{Eudave:2001} the following theorem.
\begin{te}\label{bound}
There exists a constant $C_g$ such that for every closed, orientable and irreducible 
3-manifold $M$ of Heegaard genus $g$ and every collection $T_1,\dots,T_k$, $k>C_g$ of disjoint 
incompressible tori in $M$ at least two of them $T_i$ and $T_j$ are parallel. 
\end{te}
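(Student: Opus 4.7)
The plan is to combine the Haken--Kneser finiteness theorem with a bound on the combinatorial complexity of $M$ in terms of its Heegaard genus $g$. First I would fix a minimum genus Heegaard splitting $M=H_1\cup_S H_2$ and convert it into a simplicial triangulation of $M$. Each handlebody $H_i$ retracts to a spine with one vertex and $g$ edges; thickening this spine and extending compatibly across the genus $g$ surface $S$ produces a triangulation of $M$ whose number of tetrahedra is bounded by an explicit linear function $N(g)$ of $g$.

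Second, I would invoke the Haken--Kneser finiteness theorem in its quantitative form: after isotoping the tori $T_1,\dots,T_k$ into normal position with respect to this triangulation, any family of pairwise disjoint, pairwise non-parallel, closed incompressible surfaces has cardinality at most a linear function $f(t)$ of the number $t$ of tetrahedra. The reason is that any two parallel normal surfaces cobound a product region made of prisms, and once more than a bounded number of surfaces of the same normal type appear, two of them must cobound such a product and hence be parallel. Setting $C_g:=f(N(g))$ thus bounds the number of parallelism classes realized by any disjoint collection of incompressible tori in $M$. By pigeonhole, any family of $k>C_g$ disjoint incompressible tori must contain two members in the same parallelism class, and these are then parallel in $M$.

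The main obstacle is the first step: one needs to produce the triangulation with genuine control on the number of tetrahedra in terms of $g$ alone, and to verify that the relevant Haken--Kneser style bound is effectively linear in this count. An alternative, more intrinsic approach avoids triangulations by exploiting the sweepout coming from the Heegaard splitting: put each $T_i$ in Morse position with respect to the family of level surfaces parallel to $S$, record the essential intersection curves on $S$, and invoke finiteness of isotopy classes of disjoint essential multicurves on a surface of genus $g$ to obtain the bound. Either route reflects the same underlying principle, namely that the topological complexity of $M$ visible at the scale of $S$ forces parallelism once the number of disjoint incompressible tori exceeds a $g$-dependent threshold.
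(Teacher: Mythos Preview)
The paper does not prove this theorem; it simply recalls it from the cited reference of Eudave-Mu\~noz and Shor. So there is no ``paper's own proof'' to compare against beyond that citation.

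That said, your primary approach contains a genuine error. You claim that from a genus-$g$ Heegaard splitting one can produce a triangulation of $M$ with a number of tetrahedra bounded by a function $N(g)$ of $g$ alone. This is false: the gluing homeomorphism between the two handlebodies can be arbitrarily complicated, and distinct gluings yield distinct manifolds whose triangulation complexity is not controlled by $g$. For instance, all lens spaces $L(p,q)$ have Heegaard genus $1$, yet the minimal number of tetrahedra needed to triangulate $L(p,1)$ grows without bound as $p\to\infty$. Consequently the Haken--Kneser bound, which is stated in terms of the number of tetrahedra, cannot be converted into a bound depending only on $g$ via this route.

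Your alternative sketch---putting the tori in Morse position with respect to the sweepout determined by the Heegaard surface $S$ and analyzing intersection curves on $S$---is indeed the strategy used in the cited reference, and it is the one that actually works. However, your one-line summary (``invoke finiteness of isotopy classes of disjoint essential multicurves'') is too coarse: a genus-$g$ surface carries infinitely many isotopy classes of essential curves. What is finite is the maximal number $3g-3$ of pairwise disjoint, pairwise non-isotopic essential simple closed curves, and turning this into a bound on non-parallel incompressible tori requires a careful thin-position or graphic argument controlling how the tori interact with the level surfaces. If you want to supply a proof rather than a citation, that is the direction to develop; the triangulation route should be abandoned.
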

Let $T_1$ and $T_2$ be the tangles depicted in Figure \ref{tangles}.
Let $K_n$ be the knot obtained as an iterated partial sum of the tangle $T_1$ and $n$ copies 
of the tangle $T_2$ as depicted in Figure \ref{ribsat}. 

As is clear from the picture each 
$K_n$ is a nontrivial knot bounding a ribbon disk with two ribbon singularities. Moreover
it is easy to show that every ribbon disk with less than two ribbon singularities is bounded
by the trivial knot (for these and other considerations on the ribbon number see for instance
\cite{Mizuma:2005}). We conclude that $r(K_n)=2$ for each $n>0$.
 \begin{figure}[h!]
	    \centering
	\includegraphics[scale=0.7]{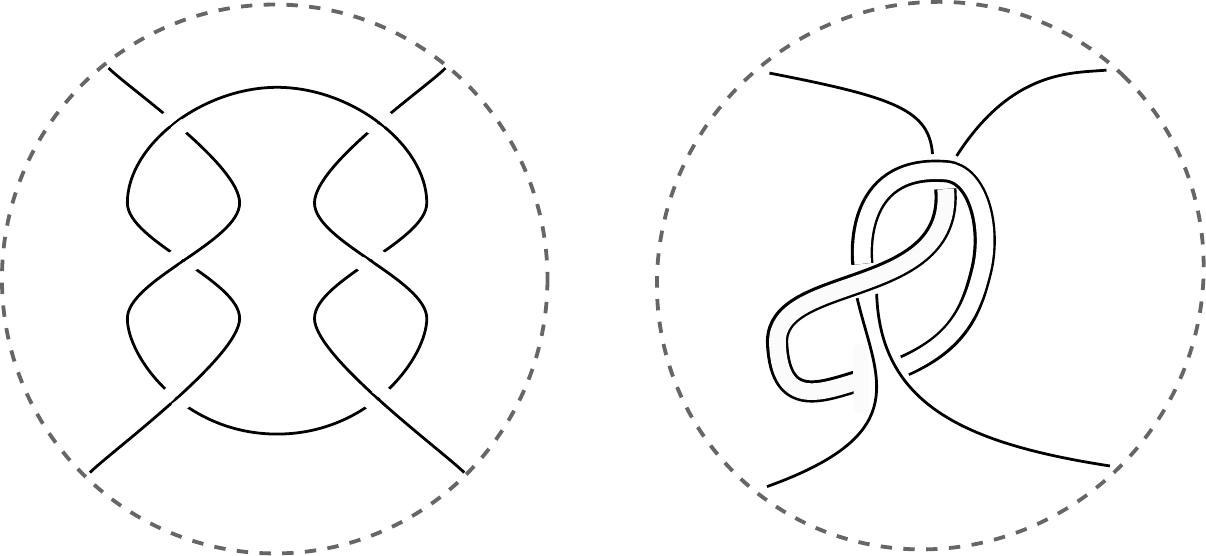}
	\caption{Two prime tangles.}
	\label{tangles}
	    \end{figure}
\begin{pro}
 For each positive integer $k$ there exists $n_k$ such that
 $$
 g_H(\Sigma(K_{n_k}))> k
 $$
\end{pro}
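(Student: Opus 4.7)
The plan is to exhibit $n$ pairwise disjoint incompressible tori in $\Sigma(K_n)$ and invoke Theorem \ref{bound} to force $g_H(\Sigma(K_n))$ to grow with $n$.

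For each $j=1,\dots,n$ I would construct a Conway $2$-sphere $\Sigma_j'\subset(S^3,K_n)$ from the iterated partial-sum description: I cap the interior gluing disk between the $(j-1)$-st and $j$-th tangle ball of the chain with a properly embedded disk in the complementary unknotted ball, so that $\Sigma_j'$ meets $K_n$ transversely in four points and splits $S^3$ into two balls, one containing the first $j$ tangles and the other the remaining $n-j+1$ tangles together with the closure arcs. These spheres are pairwise disjoint by construction, and both sides of each $\Sigma_j'$ are prime tangles, using primeness of $T_1$ and $T_2$ together with Lickorish's standard result that iterated partial sums of prime tangles (and partial sums of prime with trivial tangles) remain prime. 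By Theorem \ref{lick} each $\Sigma_j'$ lifts to an incompressible torus $\tau_j\subset\Sigma(K_n)$, and $\Sigma(K_n)$ is irreducible as it is built from irreducible pieces glued along incompressible tori.

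Given $k$, set $n_k=C_k+1$ where $C_k$ is the constant of Theorem \ref{bound}. If $g_H(\Sigma(K_{n_k}))\le k$, then Theorem \ref{bound} forces two of $\tau_1,\dots,\tau_{n_k}$ to be parallel, and a standard innermost-pair argument lets me assume $\tau_j$ and $\tau_{j+1}$ cobound a region $\widetilde R\cong T^2\times I$ in $\Sigma(K_{n_k})$. This $\widetilde R$ is the branched double cover of the $(S^2\times I)$-region $R\subset S^3$ between $\Sigma_j'$ and $\Sigma_{j+1}'$, and $R$ contains the tangle ball $(B_j,T_2^{(j)})$ together with a complementary ball carrying two unknotted closure arcs.

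To reach the contradiction I would observe that the outer sphere cuts out a $K_n$-disjoint annulus $A\subset \partial B_j\cap R$. Since $A$ misses the branch locus, it lifts to two disjoint spanning annuli $\widetilde A_1,\widetilde A_2\subset\widetilde R$ joining $\tau_j$ to $\tau_{j+1}$, whose boundary circles are essential on both boundary tori (they arise from lifts of circles on $\Sigma_j'$ and $\Sigma_{j+1}'$ that separate two branch points from two branch points). Two disjoint parallel essential spanning annuli decompose $T^2\times I$ into two solid tori, and one of these solid tori must coincide with $\widetilde B_j$, the branched double cover of the prime tangle $(B_j,T_2)$. But by Theorem \ref{lick} this cover has incompressible torus boundary and therefore cannot be a solid torus. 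The contradiction yields $g_H(\Sigma(K_{n_k}))>k$.

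The main obstacle I expect is the last paragraph: confirming that the two lifted annuli are essential, parallel, and spanning in $\widetilde R\cong T^2\times I$, so that the induced decomposition of $T^2\times I$ really identifies $\widetilde B_j$ as one of the two solid torus pieces rather than a degenerate sub-region. This requires a careful local analysis of the pillowcase branched cover $S^2\to T^2$ over each Conway sphere and of how the gluing disks and the annulus $A$ sit inside $\partial B_j$.
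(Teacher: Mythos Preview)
Your overall strategy matches the paper's: produce $n$ disjoint incompressible tori in $\Sigma(K_n)$ as lifts of Conway spheres via Theorem~\ref{lick}, establish irreducibility of $\Sigma(K_n)$, show the tori are pairwise non-parallel, and invoke Theorem~\ref{bound}. The differences are in two auxiliary steps. For irreducibility the paper takes a shorter route: $K_n$ is a sum of prime tangles, hence a prime knot by \cite{Lickorish:1981}, and then \cite{KT} gives irreducibility of $\Sigma(K_n)$ directly; this avoids your gluing-along-incompressible-tori argument and the attendant claim about partial sums with trivial tangles. For non-parallelism the paper argues differently: given any two parallel tori bounding a region $\cong T^2\times I$, Dehn fill both boundary components; every such filling yields a lens space. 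But the filling slopes can be chosen so that, downstairs, the filling corresponds to closing off the two Conway spheres by trivial tangles, and the resulting knot in $S^3$ is a cable of a connected sum of trefoils, hence not $2$-bridge, hence its branched double cover is not a lens space---contradiction. This Dehn-filling trick sidesteps exactly the local pillowcase/annulus analysis you flag as the main obstacle; your annulus argument is correct (two disjoint incompressible spanning annuli in $T^2\times I$ do cut it into two solid tori, and one of them is forced to be the branched cover of the prime tangle $T_2$), but it requires the careful verification you anticipate, whereas the paper's route trades that for a knot-recognition step.
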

\begin{proof}
It is shown in \cite{Lickorish:1981} that the tangles $T_1$ and $T_2$ are prime and that summing
prime tangles gives prime knots. By \cite{KT} a link in $S^3$ is prime if and only if
its branched double cover is irreducible. It follows that each $\Sigma(K_n)$ is an irreducible 
3-manifold.
\begin{figure}[h!]
	    \centering
	\includegraphics[scale=0.6]{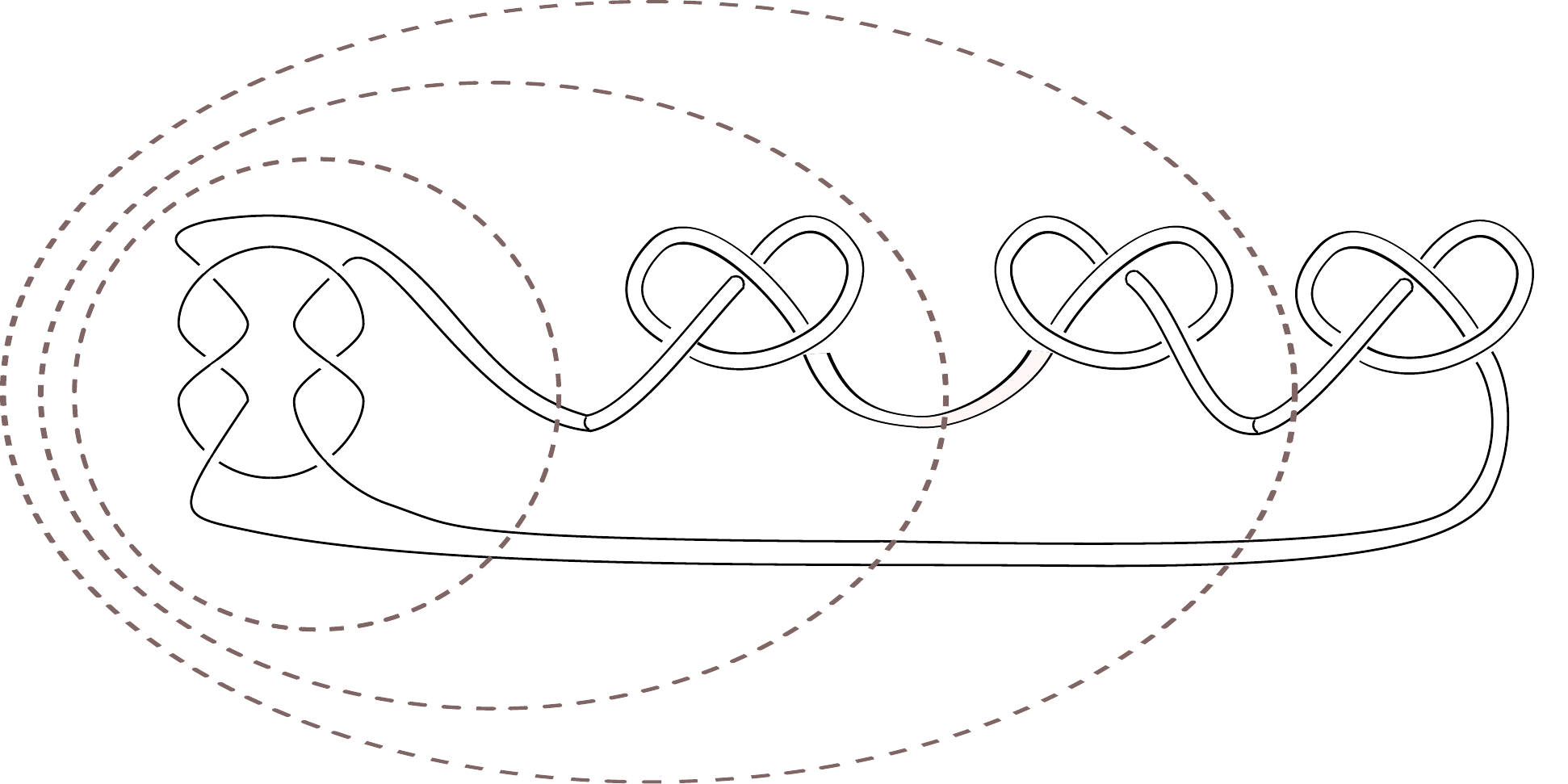}
	\caption{The knot $K_3$. Each sphere lifts to an incompressible torus 
	in the branched double cover}
	\label{ribsat}
	    \end{figure}
By Theorem \ref{lick} each sphere depicted in Figure \ref{ribsat} lifts to an 
incompressible torus in $\Sigma(K_n)$. We have $n$ disjoint incompressible tori, in order to apply
Theorem \ref{bound} we only need to show that these tori are pairwise non parallel. 
Choose a pair of tori and assume by contradiction
that they are parallel. By Dehn filling each boundary component we would obtain a Lens space. This
filling may be chosen so that it corresponds to the sum of trivial tangles in $S^3$ which gives
a cable of a connected sum of trefoil knots. Since this knot is not a 2-bridge knot 
its branched double cover cannot be a Lens space.
\end{proof}
\end{section} 
\section*{Acknowledgements}
I wish to thank Bruno Martelli for helpful conversations, my advisor Paolo Lisca, Christoph Lamm
and Micheal Eisermann for letting me use some of their pictures and Giulia Cervia
for her help in drawing pictures.

{}

\end{document}